\documentclass[11pt,a4paper,twoside]{article}
\usepackage[utf8]{inputenc}

\usepackage[a4paper, total={6.2in, 8.2in}]{geometry}

\usepackage{xcolor}
\usepackage{amsmath,amssymb,amsthm,amsfonts}
\usepackage{tikz}

\usepackage{bbm}
\usepackage{mathtools, nccmath, relsize}
\usepackage{hyperref}
\usepackage{authblk}



\theoremstyle{definition}
\newtheorem{theorem}{Theorem}[section] 
\newtheorem{definition}[theorem]{Definition}

\newtheorem{proposition}[theorem]{Proposition}

\title{Why the 1-Wasserstein distance is the area between the two marginal CDFs}
\author{Marco De Angelis}
\author{Ander Gray}
\affil{Institute for Risk and Uncertainty, University of Liverpool}

\begin{document}

\maketitle

\begin{abstract}
We elucidate why the 1-Wasserstein distance $W_1$ coincides with the area between the two marginal cumulative distribution functions (CDFs). We first describe the Wasserstein distance in terms of copulas, and then show that $W_1$ with the Euclidean distance is attained with the $M$ copula. Two random variables whose dependence is given by the $M$ copula manifest perfect (positive) dependence. If we express the random variables in terms of their CDFs, it is intuitive to see that the distance between two such random variables coincides with the area between the two CDFs.
\end{abstract}

\vspace{1em}
\noindent
Keywords:\ Kantorovich-Wasserstein metric,  Dependence, Copula, Area metric.

\section{The 1-Wasserstein distance in terms of copulas}
The Wasserstein distance is a popular metric often used to calculate the distance between two probability measures. It is  a metric because it obeys the four axioms: (1) identity of indiscernibles, (2) symmetry,  and (3) triangle inequality (4) non-negativity.  The general formal definition of the metric is attributed to Kantorovich and Wasserstein, among many more authors \footnote{See bibliographical extract at Section \ref{sec:biblionote} at the end of this document.}:

\begin{definition}[Kantorovich-Wasserstein metric]\label{def:kw_metric_general}
Let $(\mathcal{X} , d)$ be a Polish metric space, and let $p \in [1, \infty)$. For any two marginal measures $\mu$ and $\nu$ on $\mathcal{X}$, the Kantorovich-Wasserstein distance of order $p$ between $\mu$ and $\nu$ is given by
\begin{align*}
W_p(\mu, \nu) & = \left( \inf_{\pi \in \Pi(\mu, \nu)}   \int_{\mathcal{X} } d(x,y)^p \ d\pi(x,y) \right)^{1/p} \\[1ex]
						 &  = \ \inf \left\{ \mathbb{E} \left[ d(X, Y)^p  \right]^{1/p} ,\ \mu = F_{\mu}(X),\  \nu = F_{\nu}(Y) \right\},
\end{align*}
where, $\Pi(\mu, \nu)$ denotes the collection of all measures on $\mathcal{X}$ with marginals $\mu$ and $\nu$. The set $\Pi(\mu, \nu)$ is also called the set of all \emph{couplings} of $\mu$ and $\nu$.
\end{definition} 

\noindent The above definition comes from optimal transport theory \cite{villani2009optimal}, where couplings $\Pi$ denote a transport plan for moving, from $x$ to $y$, the (probability) mass of a pile of soil distributed as $\mu$  to a pile distributed as $\nu$, and given that the distance $d$ is the cost of moving a unit of mass from $x$ to $y$. Optimal transport theory entails finding the optimal coupling that minimises the overall work.

Definition \ref{def:kw_metric_general} is very general, so we specialise this definition to the case of two random variables on the real line $\mathbb{R}$, for the Euclidean distance $d(x,y) \equiv |x-y|$, and the case of degree one, $p=1$. We also change the notation to a more standard notation of probability theory:

\begin{definition}[Wasserstein distance]\label{def:kw_metric_realline}
Let $\mathcal{X} = \mathbb{R} \times \mathbb{R}$. For any two random variables $X$ and $Y$, with distribution functions $F_{X}$ and $F_{Y}$, the \emph{Wasserstein distance} between $X$ and $Y$ is given by

\begin{align*}
W_1(X, Y) & = \inf_{F_{XY} \in \mathcal{F}}   \int_{\mathcal{X} } |x -y| \ dF_{XY}(x,y)  \\[1ex]
		&  = \ \inf_{F_{XY} \in  \mathcal{F}}   \left\{ \mathbb{E}_{F_{XY}}|X -Y|  \right\},
\end{align*}

\noindent where $\mathcal{F}$ denotes the collection of all joint distributions on $\mathcal{X}$ with marginal distributions $F_{X}$ and $F_{Y}$, and $\mathbb{E}_{F_{XY}}$ is the expectation given that the joint distribution of $X$ and $Y$ is $F_{XY}$.
\end{definition} 

Since the above metric involves searching through a collection of joint distributions with fixed marginals, it is possible to express the joint distribution $F_{XY}$ in terms of the marginals $F_{X}$, $F_{Y}$ and copula  $C$ using Sklar's theorem \cite{schweizer2011probabilistic}: $F_{XY}(x,y) = C(F_{X}(x), F_{Y}(y))$. Let $\mathcal{C}$ be the set of all bivariate copulas (2-copulas). Then this definition follows:

\begin{definition}[Wasserstein distance with copulas] \label{def:w_metric_copulas}
Let $\mathcal{X} = \mathbb{R} \times \mathbb{R}$. Let $F_{X}$ and $F_{Y}$ be two marginal distributions on $\mathcal{X}$, and $C(F_{X}(x), F_{Y}(y))$ be their joint cumulative distribution in terms of copula. Then the \emph{Wasserstein distance} between $F_{X}$ and $F_{Y}$ is given by 
\begin{align*}
W_1(X, Y) & =  \inf_{C \in \mathcal{C}}   \int_{ \mathcal{X} } |x-y| \ dC(F_{X}(x), F_{Y}(y))  \\[1ex]
						 &  = \ \inf_{C \in  \mathcal{C}}   \left\{ \mathbb{E}_C |X-Y|   \right\},
\end{align*}

\noindent where $\mathcal{C}$ denotes the collection of all 2-copulas, and $\mathbb{E}_{C}$ is the expectation given that the copula between $X$ and $Y$ is $C$.

\end{definition}

Using Definition \ref{def:w_metric_copulas}, $W_1$ can be re-written in terms of the \emph{generalised inverses} \footnote{\url{https://en.wikipedia.org/wiki/Cumulative_distribution_function\#Inverse_distribution_function_(quantile_function)}}. Given that $u = F_{X}(x)$, and $v = F_{Y}(y)$, and so $x = F^{-1}_{X}(u)$ and $y = F^{-1}_{Y}(v)$, the integration may be performed on the unit square $[0,1]^2$

\begin{align} \label{eq:wasser_pseudoinv_copula}
   \nonumber  W_{1}(X, Y) &= \inf_{C \in \mathcal{C}}  \int_{\mathcal{X} } |x-y|  \ d C(F_{X}(x),F_{Y}(y)) \\
    						& = \inf_{C \in \mathcal{C}}  \int_{[0,1]^2 } |F^{-1}_{X}(u)-F^{-1}_{Y}(v)|  \ dC(u,v)  .
\end{align}

\section{The optimal distance $W_1$ holds for the case of perfect dependence between $X$ and $Y$, i.e. for $C=M$}

With Definition \ref{def:w_metric_copulas} in terms of copulas, an exact solution to the infimum in \eqref{eq:wasser_pseudoinv_copula} can be obtained by substituting $C$ with the $M$ copula. We are ready to state the main result in the following theorem.

\begin{theorem}\label{th:main}
\textit{
The infimum in \eqref{eq:wasser_pseudoinv_copula} over all 2-copulas $\mathcal{C}$ is attained at the $M$ copula, that is $C(u,v)=M(u,v)=\min\{u,v\}$, which is equivalent to demand that $u=v$,  leading to }

\begin{equation}\label{eq:main_integral}
W_{1}(X, Y) =  \int_{[0,1] } |F^{-1}_{X}(u)-F^{-1}_{Y}(u)|  \ du .
\end{equation}

\end{theorem}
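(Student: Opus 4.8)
The plan is to reduce the optimisation over all copulas to a \emph{pointwise} comparison by slicing the cost $|X-Y|$ horizontally. The key device is the elementary ``layer-cake'' identity
\[
|a-b| = \int_{-\infty}^{\infty} \bigl|\, \mathbbm{1}_{\{a \le t\}} - \mathbbm{1}_{\{b \le t\}} \,\bigr| \ dt ,
\]
valid for all reals $a,b$, since the integrand equals $1$ precisely for $t$ lying between $a$ and $b$ and $0$ elsewhere, so the integral returns the length $|a-b|$. Applying this with $a=X$, $b=Y$, taking the expectation $\mathbb{E}_C$, and invoking Tonelli's theorem (the integrand is nonnegative) to exchange expectation and integration, I obtain
\[
\mathbb{E}_C|X-Y| = \int_{-\infty}^{\infty} \mathbb{E}_C \bigl|\, \mathbbm{1}_{\{X \le t\}} - \mathbbm{1}_{\{Y \le t\}} \,\bigr| \ dt .
\]

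Next I would evaluate the inner expectation for each fixed $t$. Because $\bigl|\mathbbm{1}_{\{X \le t\}} - \mathbbm{1}_{\{Y \le t\}}\bigr|$ is the indicator that exactly one of the events $\{X \le t\}$, $\{Y \le t\}$ occurs, its expectation is $P(X \le t, Y > t) + P(X > t, Y \le t)$. Writing each term through the copula via $F_{XY}(t,t) = C(F_X(t), F_Y(t))$ gives
\[
\mathbb{E}_C \bigl|\, \mathbbm{1}_{\{X \le t\}} - \mathbbm{1}_{\{Y \le t\}} \,\bigr| = F_X(t) + F_Y(t) - 2\,C\bigl(F_X(t), F_Y(t)\bigr).
\]
The marginals are fixed, so $C$ enters only through the single subtracted term $C(F_X(t),F_Y(t))$, which carries a negative sign; hence for each $t$ the integrand is made as small as possible by making the copula value as large as possible. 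The decisive step is the Fréchet--Hoeffding upper bound $C(u,v) \le \min\{u,v\} = M(u,v)$, valid for every copula. Applied pointwise, it shows that $M$ minimises the integrand at every $t$ \emph{simultaneously}, and therefore minimises the whole integral; this is exactly where the perfect positive dependence $u=v$ enters, $M$ being the copula that concentrates all mass on the diagonal. Substituting $C=M$ and using $\alpha + \beta - 2\min\{\alpha,\beta\} = |\alpha-\beta|$ collapses the inner expression to $|F_X(t) - F_Y(t)|$.

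It remains to land on the quantile form \eqref{eq:main_integral}. Here I would simply observe that under $M$ the optimal coupling is $(F^{-1}_X(U), F^{-1}_Y(U))$ driven by a \emph{single} $U$ uniform on $[0,1]$, whence $\mathbb{E}_M|X-Y| = \int_{[0,1]} |F^{-1}_X(u) - F^{-1}_Y(u)| \ du$ directly; the same quantity equals $\int_{-\infty}^{\infty} |F_X(t)-F_Y(t)| \ dt$ by reflecting the region between the two monotone curves across the diagonal, which is the geometric content of the paper's title. I expect the main obstacle to be the measure-theoretic bookkeeping rather than the core inequality: one must confirm that $M$ genuinely lies in $\mathcal{C}$ and is attainable as a coupling with the prescribed marginals, and that the passage between the CDF variable $t$ and the quantile variable $u = F_X(x)$ via the generalised inverses is legitimate even when the marginals have atoms or flat pieces. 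The Fréchet--Hoeffding bound itself is standard, so the only genuine care is in handling these boundary effects of possibly discontinuous CDFs.
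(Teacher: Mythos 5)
Your proof is correct and takes essentially the same route as the paper: both reduce $\mathbb{E}_C|X-Y|$ to $\int_{-\infty}^{\infty}\bigl(F_X(t)+F_Y(t)-2\,C(F_X(t),F_Y(t))\bigr)\,dt$ and then apply the Fr\'echet--Hoeffding upper bound $C(u,v)\le M(u,v)$ pointwise in $t$, concluding via the comonotone coupling $(F_X^{-1}(U),F_Y^{-1}(U))$. The only difference is cosmetic: you derive the horizontal-slicing identity yourself via the layer-cake formula and Tonelli, whereas the paper cites it from Vallender, so your version is slightly more self-contained.
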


\begin{proof}

Let $X, Y \in \mathbb{R}$, we want to show that \eqref{eq:wasser_pseudoinv_copula}  has exact solution for $C = M$.  From Definition \ref{def:w_metric_copulas} the following holds:

\begin{equation} \label{eq:expectation}
\mathbb{E}_C |X-Y| =  \int_{[0,1]^2 } |F^{-1}_{X}(u)-F^{-1}_{Y}(v)|  \ dC(u,v)  .
\end{equation}

So we study the expectation \eqref{eq:expectation}. In Vallender  \cite{vallender1974calculation}, a formula is provided to express \eqref{eq:expectation} in terms of the joint probability distribution as follows:

\begin{equation*}\label{eq:expectation_1}
\begin{matrix} 
\mathbb{E} |X-Y| & =  & \displaystyle  \int_{-\infty}^{\infty} \left( P(X<t, Y \geq t) + P(X \geq t, Y<t) \right)\ dt \\[12pt]

  & = &  \displaystyle \int_{-\infty}^{\infty} \left( P(X<t) + P(Y<t) - 2 P(X < t,\ Y < t) \right) \ dt .
\end{matrix}
\end{equation*}

Because $P(X < t, Y < t)$ is the joint cumulative distribution, using Sklar's theorem we have $P(X < t, Y < t) = C \left( F_{X}(t), F_{Y}(t) \right)$. Then we can re-write the expectation in terms of the distribution functions as follows:

\begin{equation}\label{eq:expectation_with_C}
\begin{matrix} 
\mathbb{E}_C |X-Y| = &  \displaystyle \int_{-\infty}^{\infty} \left( F_{X}(t) + F_{Y}(t) - 2 C \left( F_{X}(t), F_{Y}(t) \right) \right) \ dt.
\end{matrix}
\end{equation}

\noindent All 2-copulas are bounded above and below by two copulas $W(u,v) = \max\{u+v-1,0\}$ and $M(u,v) = \min\{u,v\}$: 
$$ W(u,v) \leq C \left( u, v \right) \leq M(  u, v),$$

\noindent
thus the integrand in \eqref{eq:expectation_with_C} has the following lower bound for $C \in \mathcal{C}$:

\begin{equation}\label{eq:integrand_inequality}
    F_{X}(t) + F_{Y}(t) - 2 M \left( F_{X}(t), F_{Y}(t) \right) \leq  F_{X}(t) + F_{Y}(t) - 2 C \left( F_{X}(t), F_{Y}(t) \right).
\end{equation}

\noindent
In Figure \ref{fig:wizard_hat} we show with an example involving two random variables, that \eqref{eq:integrand_inequality} holds for some Gaussian copulas. Substituting the integrand of \eqref{eq:expectation_with_C} with the left hand side of \eqref{eq:integrand_inequality} we have that:

\begin{equation*}\label{eq:M_lower_expectation}
\displaystyle \int_{-\infty}^{\infty} \left( F_{X}(t) + F_{Y}(t) - 2 M \left( F_{X}(t), F_{Y}(t) \right) \right) \ dt \ \leq  \  \mathbb{E}_C |X-Y|.
\end{equation*}

The expectation over an arbitrary copula $\mathbb{E}_{C} |X-Y|$ has a minimum for $C=M$, thus the following holds for all copulas $C \in \mathcal{C}$:

\begin{equation*}\label{eq:w1_M_copula}
\int_{[0,1]^2 } |F^{-1}_{X}(u)-F^{-1}_{Y}(v)|  \ dM(u,v)   \ \leq \ \int_{[0,1]^2 } |F^{-1}_{X}(u)-F^{-1}_{Y}(v)|  \ dC(u,v).
\end{equation*}

Finally, because $M(u,v)=\min \{u,v \}$ is equivalent to demand perfect positive dependence $u = v$, we have that:

\begin{equation*}\label{eq:w1_M_copula_2}
\int_{[0,1]^2 } |F^{-1}_{X}(u)-F^{-1}_{Y}(v)|  \ dM(u,v)   = \int_{[0,1] } |F^{-1}_{X}(u)-F^{-1}_{Y}(u)|  \ du,
\end{equation*}

\noindent
which concludes the proof. 
\end{proof}

\begin{figure}\label{fig:wizard_hat}
\textbf{\boldmath$F_{X}(t) + F_{Y}(t) - 2C(F_{X}(t),F_{Y}(t))$} 
    \centering
    
    \includegraphics[width=0.67\textwidth]{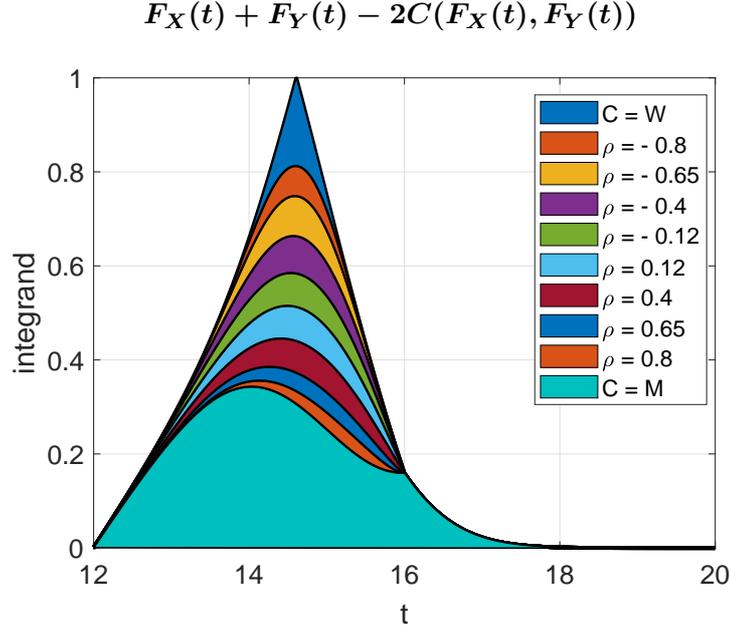}
    \caption{Integrand of \eqref{eq:expectation_with_C} for $X \sim N(15,1)$ and $Y \sim U(12,16)$, with different copulas. Gaussian copulas with parameters $\rho = \{-1, -0.8,-0.64,-0.4,-0.12,0.64,0.4,0.12,0.8, 1\}$ were used. Note that $\rho = -1$ corresponds to the $W$ copula, and $\rho = 1$ is $M$ copula. The area underneath these curves is the integral in \eqref{eq:expectation_with_C}. The smallest area corresponds to the case of the $M$ copula, whilst the largest area corresponds to the case of the $W$ copula.}
\end{figure}

\textbf{Remark}: In the above proof, two ways of solving for $\mathbb{E}_{C}[|X-Y|]$ were given: the first in terms of the distribution functions: 

\begin{equation*}
    \mathbb{E}_{C}|X-Y| = \displaystyle \int_{-\infty}^{\infty} \left( F_{X}(t) + F_{Y}(t) - 2 C \left( F_{X}(t), F_{Y}(t) \right) \right) \ dt,
\end{equation*}

\noindent and the second in terms of the inverses using a Lebesgue integral and the copula $C$:

\begin{equation*}
    \mathbb{E}_{C}|X-Y| = \int_{[0,1]^2 } |F^{-1}_{X}(u)-F^{-1}_{Y}(v)|  \ dC(u,v).
\end{equation*}

\section{$W_1$ for the case of stochastic dominance}

\begin{definition}[Stochastic dominance]
Let $X, Y \in \mathbb{R}$ be two random variables with distribution functions $F_X$ and $F_Y$, and corresponding inverses $F_X^{-1}$ and $F_Y^{-1}$. Then we say that $X$ \emph{dominates} $Y$ if and only if 
$$F_X^{-1}(u) > F_Y^{-1}(u), \ \ \ \text{for}  \ u \in [0,1].$$
This will be denoted by $X \succ Y$.
\end{definition}

\begin{proposition}[$W_1$ under dominance]\label{prop:dominance}
\textit{
Let $X, Y \in \mathbb{R}$ be two random variables, with $X \succ Y$. Then we have that}

$$W_{1}(X, Y) = \mathbb{E} X - \mathbb{E} Y.$$
\end{proposition}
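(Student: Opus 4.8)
The plan is to invoke Theorem \ref{th:main} directly, since the proposition is the special case in which the absolute value in \eqref{eq:main_integral} collapses. By the main theorem the Wasserstein distance admits the representation
\begin{equation*}
W_1(X,Y) = \int_{[0,1]} |F_X^{-1}(u) - F_Y^{-1}(u)|\, du,
\end{equation*}
so the whole argument reduces to evaluating this quantile integral under the dominance hypothesis, and no fresh optimisation over copulas is required.

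First I would apply the defining property of dominance. Since $X \succ Y$ means $F_X^{-1}(u) > F_Y^{-1}(u)$ for every $u \in [0,1]$, the integrand is everywhere nonnegative and the modulus can simply be removed, giving
\begin{equation*}
W_1(X,Y) = \int_0^1 \left( F_X^{-1}(u) - F_Y^{-1}(u) \right) du = \int_0^1 F_X^{-1}(u)\, du - \int_0^1 F_Y^{-1}(u)\, du
\end{equation*}
by linearity of the integral.

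The key step — and the only one requiring genuine justification — is the identity $\int_0^1 F_X^{-1}(u)\, du = \mathbb{E}X$, expressing the mean as the integral of the quantile function over the unit interval. The cleanest route is the inverse-transform representation: if $U$ is uniform on $[0,1]$, then $F_X^{-1}(U)$ has distribution function $F_X$, so $\mathbb{E}X = \mathbb{E}\!\left[F_X^{-1}(U)\right] = \int_0^1 F_X^{-1}(u)\, du$, and likewise for $Y$. Equivalently one may perform the change of variables $u = F_X(x)$ to recover $\int_0^1 F_X^{-1}(u)\, du = \int_{\mathbb{R}} x\, dF_X(x)$. I expect the main obstacle to be purely measure-theoretic bookkeeping rather than anything conceptual: $F_X$ need not be continuous or strictly increasing, so the identity must be read through the generalised inverse, and the flat pieces and jumps of $F_X$ are handled by the measure-preserving property of the quantile transform.

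Combining the two identities yields $W_1(X,Y) = \mathbb{E}X - \mathbb{E}Y$, which completes the argument. I would also remark that the computation shows the hypothesis can be relaxed: it suffices that $F_X^{-1} \geq F_Y^{-1}$ almost everywhere on $[0,1]$, since strictness is never actually used once the absolute value has been dropped.
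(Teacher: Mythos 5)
Your proposal is correct and follows essentially the same route as the paper: invoke Theorem \ref{th:main}, drop the absolute value in \eqref{eq:main_integral} using the dominance hypothesis, split the integral by linearity, and identify each quantile integral with the corresponding mean. The only difference is that you explicitly justify the identity $\int_0^1 F_X^{-1}(u)\,du = \mathbb{E}X$ via the inverse-transform argument (the paper states this equality without comment), and your closing remark that strict inequality can be relaxed to $F_X^{-1} \geq F_Y^{-1}$ almost everywhere is a correct, if minor, strengthening.
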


\begin{proof}
From Theorem \ref{th:main}, we know that \eqref{eq:main_integral} holds, and by hypothesis $X \succ Y$, then

\begin{align*}
    W_{1}(X, Y)  & =  \int_{[0,1] } |F^{-1}_{X}(u)-F^{-1}_{Y}(u)|  \ du \\
                & = \int_{[0,1] } F^{-1}_{X}(u)-F^{-1}_{Y}(u)  \ du \\
                & = \int_{[0,1] } F^{-1}_{X}(u) \ du - \int_{[0,1] }F^{-1}_{Y}(u)  \ du \\
                 & = \int_{-\infty}^{\infty} x \ dF_{X}(x) - \int_{-\infty}^{\infty} y \ dF_{Y}(y)\\
                & = \mathbb{E} X - \mathbb{E} Y.
\end{align*}

\end{proof}

\section{Non-overlapping ranges}

\begin{definition}[Finite support]
Let $X$ be a random variable. We say that $X$ has \emph{finite support} if $X \in [\underline{x}, \overline{x}]$, such that $-\infty < \underline{x} \leq \overline{x}< \infty$, 
\begin{equation*}
\underline{x} = \sup_{\mathbb{R}} \{x : F_{X}(x)=0 \}, \ \ \ \  \overline{x} = \inf_{\mathbb{R}} \{x : F_{Y}(x)=1 \}.
\end{equation*}
\end{definition}

\begin{proposition}[Non-overlapping ranges]\label{prop:no_overlap}
\textit{
Let $X, Y \in \mathbb{R}$ be two random variables with finite support, whose ranges do not overlap: $\overline{x} < \underline{y} \ \  \text{or} \ \ \overline{y} < \underline{x}$. Then the expectation $\mathbb{E}_{C} |X-Y| $ is a singleton, whose only element is 
}

\begin{equation}
    \left\{\begin{matrix}
    \mathbb{E} X - \mathbb{E} Y, \ \ \ \text{if} \ \ \overline{y} < \underline{x}
     \\ 
     \mathbb{E} Y - \mathbb{E} X, \ \ \ \text{if} \ \ \overline{x} < \underline{y}.
    \end{matrix}\right.
\end{equation}

\begin{proof}
Without loss of generality, assume $\overline{y} < \underline{x}$. Thus there is no value of $X$ smaller than any value of $Y$. Then the absolute value is
$|X-Y| = X-Y, \  X \in [\underline{x}, \overline{x}], \ Y \in [\underline{y}, \overline{y}]$. Therefore 

$$\mathbb{E} |X-Y| = \mathbb{E} \ [X-Y] = \mathbb{E} X - \mathbb{E} Y.$$
The counter-argument applies to the case $\overline{x} < \underline{y}$, where $\mathbb{E} |X-Y| = \mathbb{E} Y - \mathbb{E} X.$
\end{proof}

\end{proposition}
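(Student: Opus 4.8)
The plan is to exploit the fact that non-overlapping supports force the sign of $X-Y$ to be constant across the entire support of \emph{every} joint distribution, so that the absolute value can be removed uniformly in $C$. I would first fix the orientation, taking without loss of generality $\overline{y} < \underline{x}$; the remaining case follows by interchanging the roles of $X$ and $Y$. The starting observation is that for any copula $C \in \mathcal{C}$, the joint law $F_{XY} = C(F_X, F_Y)$ is supported inside $[\underline{x},\overline{x}] \times [\underline{y},\overline{y}]$, since its marginals are exactly $F_X$ and $F_Y$. On this rectangle every pair $(x,y)$ satisfies $x \geq \underline{x} > \overline{y} \geq y$, hence $x - y \geq \underline{x} - \overline{y} > 0$.

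Next I would remove the absolute value: because $X - Y > 0$ holds $F_{XY}$-almost surely for every admissible $C$, we have $|X-Y| = X-Y$ on the support, and by linearity $\mathbb{E}_C|X-Y| = \mathbb{E}_C X - \mathbb{E}_C Y$. The crucial step, and the one that actually delivers the singleton claim, is that neither $\mathbb{E}_C X$ nor $\mathbb{E}_C Y$ depends on the copula: a marginal expectation is determined by its own distribution alone, and every element of the feasible set shares the same marginals $F_X$ and $F_Y$. Consequently $\mathbb{E}_C X = \mathbb{E} X$ and $\mathbb{E}_C Y = \mathbb{E} Y$ for all $C$, so the value $\mathbb{E} X - \mathbb{E} Y$ is attained identically across $\mathcal{C}$, and the set $\{\,\mathbb{E}_C|X-Y| : C \in \mathcal{C}\,\}$ collapses to the single point $\mathbb{E} X - \mathbb{E} Y$.

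The main obstacle is precisely this singleton assertion: the content is not merely to compute the value at one copula (e.g. $M$) but to verify that it is \emph{invariant} over all of $\mathcal{C}$. A second, more self-contained way to secure it, which I would offer as a cross-check, is to start from the representation \eqref{eq:expectation_with_C} and insert the boundary identities satisfied by every copula, namely $C(0,v) = 0$ and $C(u,1) = u$. Partitioning the line according to the ordering $\underline{y} \leq \overline{y} < \underline{x} \leq \overline{x}$, on each piece at least one argument of $C$ equals $0$ or $1$, so the term $C(F_X(t), F_Y(t))$ is pinned down without reference to the interior of $C$. The integrand then depends only on $F_X$ and $F_Y$, re-proving invariance, and integrating it returns $\mathbb{E} X - \mathbb{E} Y$ again. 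Both routes ultimately rest on the copula boundary conditions together with the fixed marginals, which is exactly what removes the dependence on $C$.
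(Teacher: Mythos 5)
Your main argument is correct and follows the same route as the paper's own proof: non-overlapping supports force $X-Y>0$ on the support of every coupling, the absolute value drops, and linearity gives $\mathbb{E}_C|X-Y| = \mathbb{E}_C X - \mathbb{E}_C Y$. You are, however, more careful than the paper on the point that actually justifies the word \emph{singleton}: the paper writes $\mathbb{E}|X-Y| = \mathbb{E}[X-Y] = \mathbb{E}X - \mathbb{E}Y$ with no subscript $C$, leaving implicit that marginal expectations are copula-invariant, whereas you state this explicitly --- which is the right emphasis, since invariance over all of $\mathcal{C}$ is precisely the content of the proposition. Your second route does not appear in the paper at all and is genuinely different: starting from \eqref{eq:expectation_with_C} and using the boundary identities $C(0,v)=0$ and $C(u,1)=u$ on the partition of the line induced by $\underline{y} \leq \overline{y} < \underline{x} \leq \overline{x}$, the term $C\left(F_X(t),F_Y(t)\right)$ is pinned down everywhere, so the integrand collapses pointwise to $F_Y(t)-F_X(t)$ (it equals $F_Y(t)$ where $F_X(t)=0$, equals $1$ strictly between the supports, and equals $1-F_X(t)$ where $F_Y(t)=1$), and integrating recovers $\mathbb{E}X - \mathbb{E}Y$. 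What each buys: the probabilistic route is shorter and matches the paper; your analytic route eliminates the copula \emph{before} any expectation is taken, reusing the Vallender representation already set up for Theorem \ref{th:main}, and thereby makes the copula-independence visible at the level of the integrand rather than as a consequence of fixed marginals.
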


Note that Propositions \ref{prop:dominance} and \ref{prop:no_overlap}  are very useful for both theoretical and computational reasons.
From Proposition \ref{prop:dominance} follows that the $W_1$ distance between two random variables under dominance is given by the difference of their expected values; whilst from Proposition \ref{prop:no_overlap} follows that the distance between two bounded random variables, whose ranges do not overlap is simply the difference of their expected values, regardless of their dependence. Moreover, the computation of such distance need not evaluate the integral \eqref{eq:main_integral}, thus can be computed very quickly.

\section{Extract of bibliographical note from Villani  \cite{villani2009optimal}}\label{sec:biblionote}

``The terminology of Wasserstein distance (apparently introduced by Dobrushin) is very questionable, since (a) these distances were discovered and rediscovered by several authors throughout the twentieth century, including (in chronological order) Gini [417, 418], Kantorovich [501], Wasserstein [803], Mallows [589] and Tanaka [776] (other contributors being Salvemini, Dall'Aglio, Hoeffding, Fr{\'e}chet, Rubinstein, Ornstein,  so in particular and maybe others); (b) the explicit definition of this distance is not so easy to find in Wasserstein's work; and (c) Wasserstein was only interested in the case p = 1. By the way, also the spelling of Wasserstein is doubtful: the original spelling was Vasershtein. (Similarly, Rubinstein was spelled Rubinshtein.) These issues are discussed in a historical note by Ru{\"s}chendorf [720], who advocates the denomination of \textit{minimal Lp-metric} instead of \textit{Wasserstein distance}. Also Vershik [808] tells about the discovery of the metric by Kantorovich and stands up in favor of the terminology \textit{Kantorovich distance}''.  
For the references whose number is displayed in the above extract the reader is referred to Villani \cite{villani2009optimal}.

\bibliographystyle{unsrt}
\bibliography{biblio}

\end{document}